\newtheorem{theorem}{Theorem}
\theoremstyle{definition}
\newtheorem{definition}[theorem]{Definition}
\newtheorem{remark}[theorem]{Remark}
\newcommand{\id}{\operatorname{id}}
\newcommand{\rank}{\operatorname{rank}}
\newcommand{\R}{{\mathbb R}}
\newcommand{\C}{{\mathbb C}}
\newcommand{\F}{{\mathbb F}}
\newcommand{\Q}{{\mathbb Q}}
\title{Matrix Completion and Tensor Rank}
\author{Harm Derksen}
\thanks{The author was partially supported by NSF grant DMS 0901298.}
\begin{document}

\begin{abstract}
 In this paper, we show that the low rank matrix completion problem can be reduced
to the problem of finding the rank of a certain tensor.
\end{abstract}
\maketitle
\section{introduction}
Suppose we are given a rectangular array which has only been partially filled out with entries in $\R$
(or some other field).
The low rank matrix completion problem asks to fill out the remaining entries such that
the resulting matrix has minimal rank.
For example,  the array
$$
\begin{pmatrix}
2 & \cdot & \cdot \\
2 & 3 & \cdot\\
\cdot & 6 & 2\end{pmatrix}
$$
can be completed to a rank $1$ matrix
$$
\begin{pmatrix}
2 & 3 & 1 \\
2 & 3 & 1\\
4 & 6 & 2\end{pmatrix}
$$
but clearly cannot be completed to a rank $0$ matrix. For a survey on matrix completion problems, see~\cite{Johnson, Laurent}.
The low rank matrix completion problem has various
applications, such as statistics, computer vision, signal processing and control. The low rank matrix completion problem is known to be NP-hard (see~\cite[Theorem 3.1]{Peeters}). 
For the field $\F=\R$ or $\F=\C$
one sometimes can solve the matrix completion problem using convex relaxation: 
 under certain assumptions minimizing the nuclear norm of the matrix (the sum of the singular values)
 one also obtains the minimal rank completion. See for example \cite{RFP, CR, CT,KMO}. The rank of a tensor was defined in \cite{Hitchcock}. Using this notion, 
 matrix completion problems  can be generalized to higher order tensors (see~\cite{GRY}).

Suppose that $\F$ is a field, and $V^{(i)}=\F^{m_i}$ for $i=1,2,\dots,d$. The tensor product space
$$
V=V^{(1)}\otimes V^{(2)}\otimes \cdots \otimes V^{(d)}
$$
can be viewed as a $d$-dimensional array of size $m_1\times m_2\times \cdots \times m_d$. A {\em pure} tensor is an element of $V$ of the form
$$
v^{(1)}\otimes v^{(2)}\otimes \cdots \otimes v^{(d)},
$$
where $v^{(i)}\in V^{(i)}$ for all $i$. The rank of a tensor $T\in V$ is the smallest nonnegative integer $r$
such that $T$ can be written as a sum of $r$ pure tensors:
$$
T=\sum_{i=1}^r v^{(1)}_i\otimes v^{(2)}_i\otimes \cdots \otimes v^{(d)}_i
$$
where $v^{(i)}_j\in V^{(i)}$ for all $i$ and $j$. We denote the tensor rank of $T$ by $\rank(T)$.
If $A=(a_{i,j})$ is an $n\times m$ matrix,
then we can identify $A$ with the tensor
$$
\sum_{i=1}^n\sum_{j=1}^m a_{i,j} (e_i\otimes e_j),
$$
 in $\F^n\otimes \F^m$,
where $e_i$ denotes the $i$-th basis vector. The tensor rank in this case is exactly the same as the rank of the matrix. For $d\geq 3$ it is often difficult to find the rank of a given tensor.
Finding the rank of a tensor is NP-complete if the field  $\F$ is  finite  (\cite{Hastad,Hastad2}) and
NP-hard if the field is $\Q$ (\cite{Hastad,Hastad2}), or if it contains $\Q$ (\cite{HL}). Approximation
of a tensor by another tensors of low rank is known as the PARAFAC (\cite{Harshman}) or CANDECOMP (\cite{CC}) model. There are various algorithms for finding low-rank approximations. See~\cite{KB,TB} for a discussion.
The problems of finding the tensor rank of a tensor, and to approximate tensors with tensors of low rank
 has many applications, such as the complexity of matrix multiplication, fluorescence spectroscopy,  statistics, psychometrics, geophysics and magnetic resonance imaging. 
In general, the rank of higher tensors more ill-behaved than the rank of a matrix (see~\cite{DSL}).

In the next section we will show that the low rank matrix completion problem can be reduced to finding
the rank of a certain tensor. Lek-Heng Lim pointed out to the author, that because low rank matrix completion is NP-hard, this gives another proof that determining the rank of a tensor  is NP-hard.

\section{The main result }
Suppose that $\F$ is a field, and $A=(a_{i,j})$ is an $n\times m$ array for which some of the entries are given, and
the entries in positions $(i_1,j_1),\dots,(i_s,j_s)$ are undetermined. Define $e_{i,j}$ as the matrix
that has a $1$ in position $(i,j)$ and $0$ everywhere else.
We can reformulate the low rank matrix completion problem as follows. Define $a_{i_k,j_k}=0$ for $k=1,2,\dots,s$ and find a matrix $B$
of the form
$$
B=A+\sum_{k=1}^s \lambda_k e_{i_k,j_k},\quad \lambda_1,\dots,\lambda_s\in \F.
$$
which has minimal rank.

We can view matrices as second order tensors. So we identify $e_{i,j}$ with $e_{i}\otimes e_j\in \F^n\otimes \F^m$ and we have
$$
A=\sum_{i=1}^n\sum_{j=1}^m a_{i,j}(e_i\otimes e_j).
$$
Define $u_k=e_{i_k}$ and $v_k=e_{j_k}$.
The matrix completion problem asks to find $\lambda_1,\dots,\lambda_s\in \F$ such that the tensor
$$
B=A+\sum_{k=1}^s \lambda_k (u_k\otimes v_k)=\sum_{i=1}^n\sum_{j=1}^m a_{i,j}(e_i\otimes e_j)+\sum_{k=1}^s \lambda_k (u_k\otimes v_k)
$$
has minimal rank.

Let $U\subseteq \F^{n}\otimes \F^m$ be the vector space spanned by $u_1\otimes v_1,\dots,u_s\otimes v_s$.
\begin{definition}
We define
$$
\rank(A,U)=\min\{\rank(B)\mid B\in A+U\}.
$$
\end{definition}
So the matrix completion asks for the value of $\rank(A,U)$ and to find a matrix $B\in A+U$ for which $\rank(A,U)=\rank(B)$.

We define a third order tensor $\widehat{A}\in \F^n\otimes \F^m\otimes \F^{s+1}$
by
\begin{equation}\label{eq:Ahat}
\widehat{A}=A\otimes e_{s+1}+\sum_{k=1}^s u_k\otimes v_k\otimes e_k=
\sum_{i=1}^n\sum_{j=1}^m a_{i,j} (e_i\otimes e_j\otimes e_{s+1})+\sum_{k=1}^s u_k\otimes v_k\otimes e_k.
\end{equation}
Our main result is:
\begin{theorem}\label{theo:main}
We have
$$
\rank(A,U)+s=\rank(\widehat{A}).
$$
\end{theorem}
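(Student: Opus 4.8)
The plan is to establish the two inequalities $\rank(\widehat{A})\le \rank(A,U)+s$ and $\rank(\widehat{A})\ge \rank(A,U)+s$ separately, and the engine for both is a reformulation of $\rank(\widehat{A})$ in terms of the two-dimensional slices of $\widehat{A}$. Writing $\widehat{A}=\sum_{j=1}^{s+1}\widehat{A}_j\otimes e_j$ with slices $\widehat{A}_j\in\F^n\otimes\F^m$, equation~\eqref{eq:Ahat} gives $\widehat{A}_{s+1}=A$ and $\widehat{A}_k=u_k\otimes v_k=e_{i_k,j_k}$ for $1\le k\le s$. First I would record the elementary lemma that $\rank(\widehat{A})$ equals the least integer $N$ for which there exist rank one matrices $M_1,\dots,M_N\in\F^n\otimes\F^m$ whose span contains every slice $\widehat{A}_j$. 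Indeed, given a decomposition $\widehat{A}=\sum_{t=1}^N p_t\otimes q_t\otimes w_t$, contracting the third factor against $e_j^\ast$ exhibits each slice as a linear combination of the rank one matrices $p_t\otimes q_t$; conversely, if the span of rank one matrices $M_t$ contains all slices, then writing $\widehat{A}_j=\sum_t c_{j,t}M_t$ and collecting terms yields $\widehat{A}=\sum_t M_t\otimes(\sum_j c_{j,t}e_j)$, a decomposition into $N$ pure tensors.

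For the upper bound, I would take a completion $B\in A+U$ with $\rank(B)=\rank(A,U)$ and a decomposition $B=\sum_{l=1}^r P_l$ into $r=\rank(A,U)$ rank one matrices. Since $A=B-\sum_{k=1}^s\lambda_k e_{i_k,j_k}$ for suitable scalars $\lambda_k$, the $r+s$ rank one matrices $P_1,\dots,P_r,e_{i_1,j_1},\dots,e_{i_s,j_s}$ span a space containing $A$ and all of the $e_{i_k,j_k}$, hence all slices; the lemma then gives $\rank(\widehat{A})\le r+s$.

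The lower bound is the main point. Let $N=\rank(\widehat{A})$ and apply the lemma to obtain rank one matrices $M_1,\dots,M_N$ whose span $S$ contains $A$ and the matrix units $e_{i_1,j_1},\dots,e_{i_s,j_s}$. The crucial observation is that these $s$ matrix units are linearly independent, because the undetermined positions $(i_k,j_k)$ are distinct. By the Steinitz exchange lemma I can therefore swap them into the spanning family while discarding $s$ of the $M_t$: after reindexing, the matrices $e_{i_1,j_1},\dots,e_{i_s,j_s},M_1,\dots,M_{N-s}$ still span $S$. Expanding $A\in S$ in this family gives $A=\sum_{k=1}^s\mu_k e_{i_k,j_k}+\sum_{t=1}^{N-s}\nu_t M_t$, so that $B:=A-\sum_k\mu_k e_{i_k,j_k}$ lies in $A+U$ and equals $\sum_{t=1}^{N-s}\nu_t M_t$, a sum of $N-s$ rank one matrices. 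Hence $\rank(A,U)\le\rank(B)\le N-s$, which rearranges to $\rank(A,U)+s\le\rank(\widehat{A})$, completing the proof.

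I expect the only genuine obstacle to be the covering lemma itself, namely correctly passing between a three-factor rank one decomposition of $\widehat{A}$ and a covering of its slices by rank one matrices; once that is in hand, both directions are short, with the lower bound resting on the distinctness of the undetermined positions and a single application of Steinitz exchange. A secondary point to verify is that the leftover term $\sum_k\mu_k e_{i_k,j_k}$ really lies in $U$, which is immediate since $U$ is spanned by $u_1\otimes v_1,\dots,u_s\otimes v_s$, that is, by $e_{i_1,j_1},\dots,e_{i_s,j_s}$.
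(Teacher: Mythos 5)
Your proof is correct, and the lower bound is run through a genuinely different mechanism than the paper's, even though the overall architecture (an easy upper bound from a rank-$r$ completion, plus a lower bound that extracts from a length-$l$ decomposition of $\widehat{A}$ a matrix in $A+U$ of rank at most $l-s$) necessarily coincides. The paper works in the third tensor factor: it contracts $\widehat{A}=\sum_j a_j\otimes b_j\otimes c_j$ against functionals $h_i$ on $\F^n\otimes\F^m$ dual to the $u_i\otimes v_i$, concludes that $c_1,\dots,c_l,e_{s+1}$ span $\F^{s+1}$, rearranges so that $c_1,\dots,c_s,e_{s+1}$ is a basis, and applies the functional $f$ vanishing on $c_1,\dots,c_s$ with $f(e_{s+1})=1$ to produce $\sum_{j>s}f(c_j)\,a_j\otimes b_j\in A+U$. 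You instead work entirely in $\F^n\otimes\F^m$, via the classical characterization of the rank of a third-order tensor as the least number of rank-one matrices whose span contains all slices, followed by a Steinitz exchange that swaps the linearly independent slices $u_k\otimes v_k$ into the spanning family of rank-one matrices. The two arguments are dual to each other: your slice lemma is contraction against the dual basis of the third factor, while the paper's $h_i$ are contractions on the first two factors, and in both cases the ``exchange'' step is present, just performed in different spaces ($\F^{s+1}$ there, the slice span here). Both proofs are effective. Your route makes the structure more transparent and, like the paper's, visibly uses only that the $u_k\otimes v_k$ are linearly independent pure tensors, so the generalizations in the paper's remarks (general pure tensors, higher-order completion) come for free; the paper's route avoids isolating the slice lemma and constructs the completion by a single explicit contraction. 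The only hypotheses you should make sure to flag are that $s\le N$ (immediate, since the $s$ independent matrix units lie in the span of the $N$ matrices $M_t$) and the linear independence of the $e_{i_k,j_k}$, which you do.
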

The theorem is effective: given an explicit decomposition of $\widehat{A}$ as a sum of $l=\rank(\widehat{A})$ pure tensors, 
the proof shows that one easily can construct a matrix $B$ in $A+U$ such that $\rank(B)=l-s=\rank(A,U)$.
\begin{proof}[Proof of Theorem~\ref{theo:main}]
Let $r=\rank(A,U)$ and $l=\rank(\widehat{A})$.
By definition, there exists a matrix $B\in A+U$  with rank $r$. We can write
$$
B=A+\sum_{k=1}^s \lambda_k (u_k\otimes v_k).
$$
Since $B$ has rank $r$, we also can write
$$
B=\sum_{i=1}^r x_i\otimes y_i
$$
for some $x_1,\dots,x_r\in \F^n$ and $y_1,\dots,y_r\in \F^m$.
We have 
\begin{multline*}
\widehat{A}=A\otimes e_{s+1}+\sum_{k=1}^s u_k\otimes v_k\otimes e_k=
(B-\sum_{k=1}^s \lambda_k u_k\otimes v_k)\otimes e_{s+1}+\sum_{k=1}^s u_k\otimes v_k\otimes e_k=\\=
B\otimes e_{s+1}+\sum_{k=1}^s u_k\otimes v_k\otimes (e_k -\lambda_k e_{s+1})=
\sum_{i=1}^r x_i\otimes y_i+\sum_{k=1}^s u_k\otimes v_k\otimes (e_k-\lambda_k e_{s+1}).
\end{multline*}
We have written $\widehat{A}$ as a sum of $r+s$ pure tensors. So $l=\rank(\widehat{A})\leq r+s$.

Conversely, we can write
$$
\widehat{A}=\sum_{j=1}^l a_j\otimes b_j\otimes c_j
$$
 For every $i$, choose a linear function $h_i:\F^n\otimes \F^m\to \F$ such that $h_i(u_i\otimes v_i)=1$
and $h_i(u_j\otimes v_j)=0$ if $j\neq i$.
Applying $h_i\otimes \id:\F^n\otimes \F^m\otimes \F^{s+1}\to \F^{s+1}$ to the tensor $\widehat{A}$ gives
$$
\sum_{j=1}^l h_i(a_j\otimes b_j) c_j=(h_i\otimes \id)(\widehat{A})=e_i+h_i(A) e_{s+1}.
$$
This shows that $e_i$ lies in the span of $c_1,\dots,c_l,e_{s+1}$ for all $i$. So $c_1,\dots,c_l,e_{s+1}$
span the vector space $\F^{s+1}$.
After rearranging $c_1,\dots,c_l$, we may assume that $c_1,c_2,\dots,c_{s},e_{s+1}$ is a basis of $\F^{s+1}$.
There exists a linear function $f:\F^{s+1}\to \F$ such that $f(c_1)=\cdots=f(c_s)=0$ and $f(e_{s+1})=1$.
We apply $\id\otimes \id\otimes f:\F^n\otimes \F^m\otimes \F^{s+1}\to \F^n\otimes \F^m$ to $\widehat{A}$:
$$
A+\sum_{i=1}^s f(e_j) u_i\otimes v_i=(\id\otimes\id\otimes f)(\widehat{A})=\sum_{j=1}^l f(c_j) a_j\otimes b_j=\sum_{j=s+1}^l f(c_j)a_j\otimes b_j.
$$
The tensor above is a matrix in $A+U$ which has rank  at most $l-s$, and by definition of $r=\rank(A,U)$
it has rank at least $r$.
It follows that $r\leq l-s$. We have already proven that $r+s\geq l$, so we conclude that $r+s=l$.
\end{proof}

\begin{remark}
In the proof we have not used that $u_k$ and $v_k$ are basis vectors. So Theorem~\ref{theo:main}
is true whenever $U$ is the span of linearly independent pure tensors
$$u_1\otimes v_1,\dots,u_s\otimes v_s$$
and $\widehat{A}$ is given by (\ref{eq:Ahat}).
\end{remark}
\begin{remark}
Define 
$$t_i\in (\F^2)^{\otimes s}=\underbrace{\F^2\otimes \cdots\otimes \F^2}_s
$$
by 
$$
t_i=e_1\otimes \cdots \otimes e_1\otimes e_2\otimes e_1\otimes \cdots \otimes e_1
$$
where $e_2$ appears in the $i$-th position if $i\leq s$, and
$$
t_{s+1}=e_1\otimes e_1\otimes \cdots \otimes e_1.
$$
We define a tensor $\widetilde{A}\in \F^n\otimes \F^m\otimes (\F^2)^{\otimes s}$ by
$$
\widetilde{A}=u_1\otimes v_1\otimes t_1+\cdots+u_s\otimes v_s\otimes t_s+A\otimes t_{s+1}.
$$
A proof, similar to that of Theorem~\ref{theo:main} shows that $\rank(\widetilde{A})=s+\rank(A,U)$.
The ambient vector space of $\widetilde{A}$ is much larger than that of $\widehat{A}$. But one can
imagine that using the tensor $\widetilde{A}$ can be advantageous for proving lower bounds
for $\rank(A,U)$ because it is, in a way, more rigid.
\end{remark}
\begin{remark}
Theorem~\ref{theo:main} also easily generalizes to the higher order analog of matrix completion: {\em tensor completion}. Suppose that $V^{(1)},\dots,V^{(d)}$ are finite dimensional $\F$-vector spaces and
$
V=V^{(1)}\otimes \cdots \otimes V^{(d)}$.
Assume that $U\subseteq V$ is a subspace spanned by $s$ linearly independent pure tensors
$$
u^{(1)}_i\otimes u^{(2)}_i\otimes \cdots \otimes u^{(d)}_i,\quad i=1,2,\dots,s
$$
and 
$$A\in V^{(1)}\otimes \cdots \otimes V^{(d)}$$
is a tensor. As before, we define
$$
\rank(A,U)=\min\{\rank(B)\mid B\in A+U\}.
$$
Define a $(d+1)$-th order tensor  by
$$
\widehat{A}=A\otimes e_{s+1}+\sum_{k=1}^s u^{(1)}_k\otimes \cdots \otimes u^{(d)}_k\otimes e_k\in V^{(1)}\otimes \cdots \otimes V^{(d)}\otimes \F^{s+1}.
$$
Then we have
$$\rank(A,U)+s=\rank(\widehat{A}).
$$
\end{remark}
\subsection*{Acknowledgment}
The author thanks Lek-Heng Lim for some useful suggestions, and Reghu Meka for providing me with an important reference.

\ \\[20pt]
\noindent{\sl Harm Derksen\\
Department of Mathematics\\
University of Michigan\\
530 Church Street\\
Ann Arbor, MI 48109-1043, USA\\
{\tt hderksen@umich.edu}}

 \end{document}